\documentclass{article}

\usepackage{graphicx,latexsym,amsmath,amsthm,amssymb,color,verbatim,mathrsfs,
bbm,amscd}
\usepackage[latin1] {inputenc}
\usepackage{youngtab,url}
\usepackage[all]{xy}

\newtheorem{theorem}[equation]{Theorem}
\newtheorem{corollary}[equation]{Corollary}

\newtheorem{proposition}[equation]{Proposition}
\newtheorem{conjecture}[equation]{Conjecture}
\newtheorem{claim}[equation]{Claim}

\theoremstyle{definition}
\newtheorem{remark}[equation]{Remark}
\newtheorem{definition}[equation]{Definition}

\newcommand{\tensor}{\smash{\textstyle\bigotimes}}
\newcommand{\Sym}{\mathsf{Sym}}
\newcommand{\GL}{\mathsf{GL}}
\newcommand{\FH}{\ensuremath{\Psi}}
\newcommand{\Ch}{\mathscr{C}}

\newcommand{\la}{\lambda}
\newcommand{\HWV}{\mathsf{HWV}}
\newcommand{\IC}{\ensuremath{\mathbb{C}}}
\newcommand{\IN}{\ensuremath{\mathbb{N}}}
\newcommand{\IZ}{\ensuremath{\mathbb{Z}}}
\newcommand{\aS}{\ensuremath{\mathfrak{S}}}

\newcommand{\mult}{\text{mult}}

\newcommand{\inj}{\ar@{^{(}->}}
\newcommand{\surj}{\ar@{->>}}

\newcommand{\diag}{\text{diag}}

\title{Symmetrizing Tableaux and the 5th case of the Foulkes Conjecture}
\author{Man-Wai (Mandy) Cheung${}^1$, Christian Ikenmeyer${}^2$, Sevak Mkrtchyan${}^3$}
\date{\today}

\sloppy
\begin{document}
\maketitle

\begin{abstract}
The Foulkes conjecture states that the multiplicities in the plethysm $\Sym^a(\Sym^b V)$
are at most as large as the multiplicities in the plethysm $\Sym^b(\Sym^a V)$ for all $a \leq b$.
This conjecture has been known to be true for $a \leq 4$.
The main result of this paper is its verification for $a = 5$.
This is achieved by performing a combinatorial calculation on a computer and
using a propagation theorem of Tom McKay from 2008.

Moreover, we obtain a complete representation theoretic decomposition of the vanishing ideal of the $5$th Chow variety in degree~$5$,
we show that there are no degree 5 equations for the 6th Chow variety,
and we also find some representation theoretic degree~$6$ equations for the $6$th Chow variety.
\end{abstract}

{\mbox{~}}

{\noindent\footnotesize Keywords: Representation theory of the symmetric group; Plethysm; Foulkes Conjecture; Foulkes-Howe Conjecture, Chow variety

}

{\mbox{~}}

\noindent{\footnotesize MSC2010: 20C30, 20C40, 14-04

}

\footnotetext[1]{University of California, San Diego, mwc31$@$cam.ac.uk}
\footnotetext[2]{Texas A\&M University, ciken$@$math.tamu.edu}
\footnotetext[3]{University of Rochester, sevak.mkrtchyan$@$rochester.edu}

\section{Introduction}
The Foulkes conjecture~\cite{Fou:50} states an inequality of certain representation theoretic multiplicities.
For a finite dimensional vector space $V$, let $\Sym^b V$ denote its $b$th symmetric power and let
$\Sym^a\Sym^b V$ denote the $a$th symmetric power of $\Sym^b V$.
We will always assume that $\dim V$ is large enough, i.e., $\dim V \geq \max\{a,b\}$.
As a $\GL(V)$ representation, $\Sym^a\Sym^b V$ has an intriguing structure with interesting long-standing open questions like the following one.
\begin{conjecture}[Foulkes conjecture]\label{conj:foulkes}
Let $a, b \in \IN_{>0}$ with $a \leq b$.
Then for every partition $\lambda$
the multiplicity of the irreducible $\GL(V)$ representation $\{\lambda\}$ in the plethysm
$\Sym^a \Sym^b V$ is at most as large as 
the multiplicity of $\{\lambda\}$ in $\Sym^b \Sym^a V$.
\end{conjecture}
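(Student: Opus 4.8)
The plan is to deduce Conjecture~\ref{conj:foulkes} from the injectivity of a single natural map. Realizing $\Sym^a\Sym^b V$ as a direct summand of $V^{\otimes ab}$ (group the $ab$ tensor slots into $a$ blocks of $b$, symmetrize inside the blocks, then symmetrize the blocks) and composing the inclusion with the analogous projection $V^{\otimes ab}\twoheadrightarrow\Sym^b\Sym^a V$ (using instead $b$ blocks of $a$) produces the $\GL(V)$-equivariant \emph{Foulkes--Howe map} $\FH_{a,b}\colon\Sym^a\Sym^b V\to\Sym^b\Sym^a V$. If $\FH_{a,b}$ is injective whenever $a\le b$, then Conjecture~\ref{conj:foulkes} follows at once: a $\GL(V)$-equivariant injection carries the $\{\la\}$-isotypic component of the source into that of the target, and by Schur's lemma such an injection forces $\mult_{\Sym^a\Sym^b V}(\la)\le\mult_{\Sym^b\Sym^a V}(\la)$. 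Geometrically $\ker\FH_{a,b}$ is exactly the degree-$a$ part of the vanishing ideal of the $b$-th Chow variety, so the target statement is equivalent to the absence of such equations.

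\textbf{Reduction to finite combinatorics.} Since $\dim V\ge\max\{a,b\}$ we are in the stable range, and Schur--Weyl duality turns every multiplicity above into a plethysm coefficient of the symmetric group: $\mult_{\Sym^a\Sym^b V}(\la)$ equals the multiplicity of the Specht module $S^\la$ in $\operatorname{Ind}_{\aS_b\wr\aS_a}^{\aS_{ab}}\mathbbm{1}$, and similarly for $\Sym^b\Sym^a V$, while $\FH_{a,b}$ becomes an explicit $\aS_{ab}$-equivariant double-coset operator between the permutation modules $\IC[\aS_{ab}/(\aS_b\wr\aS_a)]$ and $\IC[\aS_{ab}/(\aS_a\wr\aS_b)]$. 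Injectivity can then be tested isotypic component by isotypic component, and on the $\la$-component it is precisely the statement that a certain explicit integer matrix has full column rank. The rows and columns of that matrix are the \emph{symmetrizing tableaux} of the title and its entries are given by a transparent counting rule, so for each fixed pair $(a,b)$ the whole question reduces to one finite rank computation that a computer can carry out exactly.

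\textbf{Reduction in $b$ and the computation.} There are infinitely many pairs $a\le b$, but this is handled by the propagation theorem of McKay~(2008): for fixed $a$, injectivity of $\FH_{a,b}$ for the small values of $b$ propagates to all larger $b$, so it suffices to verify injectivity in a bounded range of $b$ (the square case $\FH_{a,a}$ being the prototype). Hence for each fixed $a$ the conjecture is reduced to finitely many of the rank computations above; carrying these out for $a=5$ would extend the known range of the conjecture from $a\le 4$ to $a\le 5$.

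\textbf{Main obstacle.} What this plan does \emph{not} do is dispose of all $a$ at once: it converts ``$a\le b$'' into an infinite family of finite computations, one per $a$, whose sizes explode (the number and dimensions of the isotypic components of $\IC[\aS_{a^2}/(\aS_a\wr\aS_a)]$ grow very fast), so an unaided search reaches only small $a$. A proof of the full conjecture along these lines would require a \emph{uniform} reason that $\FH_{a,a}$ is injective --- for instance an explicit ordering of the symmetrizing tableaux making the relevant matrices unitriangular, or a cancellation-free combinatorial model for the plethysm coefficients on both sides --- and locating such a structural statement, rather than producing the individual rank certificates, is the genuine difficulty. I expect the bottleneck to be exactly this: establishing injectivity of $\FH_{a,a}$ in a way that is independent of $a$.
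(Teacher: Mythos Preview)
First, the paper does not prove Conjecture~\ref{conj:foulkes}; it remains open, and the paper establishes only the case $a=5$ (Theorem~\ref{cor:bettercomputation}). Your proposal is explicitly a strategy outline, and you rightly flag that it does not close the conjecture for all~$a$. But the strategy has a genuine obstruction, not merely a computational one: you locate the bottleneck at ``establishing injectivity of $\FH_{a,a}$ in a way that is independent of~$a$,'' and that target is unattainable. The paper cites M\"uller--Neunh\"offer's result that $\FH_{5,5}$ has nontrivial kernel and in Theorem~\ref{thm:maininj}(b) computes that kernel exactly as a direct sum of eight irreducibles. Thus the Foulkes--Howe conjecture (injectivity of $\FH_{a,b}$ for all $a\le b$) is already false at the square case $a=b=5$, even though Foulkes itself holds trivially there since the two plethysms coincide. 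No uniform injectivity argument for $\FH_{a,a}$ can exist, and no unitriangular ordering of symmetrizing tableaux will produce one.

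The paper's route for $a=5$ is to step past the failing square and verify that $\FH_{5,6}$ is injective (Theorem~\ref{thm:maininj}(a)); McKay's theorem then propagates injectivity to all $b\ge 6$, and $b=5$ is tautological. Your phrase ``it suffices to verify injectivity in a bounded range of~$b$'' also overstates what McKay delivers: his theorem says injectivity at one $b_0$ propagates upward, but supplies no a~priori bound on $b_0$ and no guarantee that any injective $\FH_{a,b_0}$ exists for a given~$a$. So even as a per-$a$ program your reduction is incomplete without a separate existence argument for the first injective value of~$b$.
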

The inequality $a \leq b$ is important: We know that $\Sym^a \Sym^b V$
contains irreducible $\GL(V)$ representations $\{\la\}$ for which the Young diagram of $\la$ has up to $a$ rows,
but $\Sym^b \Sym^a V$ contains irreducible $\GL(V)$ representations $\{\la\}$ for which the Young diagram of $\la$ has up to $b$ rows.

Brion \cite{Bri:93} showed that Conjecture~\ref{conj:foulkes} is true in the cases where $b$ is large enough with respect to $a$.
Conjecture~\ref{conj:foulkes} is true if we only consider partitions $\la$ with at most 2 rows~\cite{Her:1854}, a phenomenon called Hermite reciprocity.
Manivel showed that Conjecture~\ref{conj:foulkes} is also true for all partitions $\la$ with very long first rows \cite[Thm.~4.3.1]{mani:98}.
In this case we do not only have an inequality of the multiplicities, but equality.

Conjecture~\ref{conj:foulkes} is known to be true for all $a \leq 4$:
For $a \leq 2$ see the explicit formulas in \cite{Thr:42};
for the case $a \leq 3$ see \cite{DS:00}.
The proof for $a=4$ uses a canonical map $\Psi_{4,4}$ and a propagation theorem by McKay.
This study is related to a stronger conjecture, sometimes called the Foulkes-Howe conjecture, which at least dates back to Hadamard \cite{Had:97}.
We will explain this in more detail now.
Clearly Conjecture~\ref{conj:foulkes} is equivalent to saying that there exists a $\GL(V)$ equivariant inclusion map
\begin{equation}\label{eq:foulkesconj}
\Sym^a\Sym^b V \hookrightarrow \Sym^b\Sym^a V.
\end{equation}
A natural candidate for the map in \eqref{eq:foulkesconj} was the following map $\Psi_{a,b}$:
\[
\xymatrix{
\Sym^a \Sym^b V \inj[d]^\iota \ar[r]^{\Psi_{a,b}} & \Sym^b \Sym^a V \\
\tensor^a\tensor^b V \ar[r]^{r} & \tensor^b \tensor^a V \surj[u]_{\varrho}
}
\]
where $\iota$ denotes the canonical embedding of symmetric tensors in the space of all tensors,
$\varrho$ is the canonical projection,
and $r$ is the canonical isomorphism given by reordering tensor factors.

Hadamard conjectured \cite{Had:97} that $\Psi_{a,b}$ is injective for all $a \leq b$.
Howe \cite[p.~93]{How:87} wrote that this ``is reasonable to expect''.
This conjecture is known as the Foulkes-Howe conjecture.
However, M\"uller and Neunh\"offer \cite{MN:05} showed that $\Psi_{5,5}$ has a nontrivial kernel. As is in our case, the results in \cite{MN:05} were also based on computer computations.

McKay \cite{McK:08} contributed the following important propagation theorem.
\begin{theorem}\label{thm:main}
If $\Psi_{a,(b-1)}$ is injective, then $\Psi_{a,c}$ is injective for all $c \geq b$.
\end{theorem}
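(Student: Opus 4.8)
The plan is to prove the one‑step statement ``$\Psi_{a,c-1}$ injective $\Rightarrow$ $\Psi_{a,c}$ injective'' and then iterate it upward from $\Psi_{a,b-1}$; all the content is in the single step. The idea is to realise $\Psi_{a,c}$ as a ``sub‑object'' of $\Psi_{a,c-1}\otimes\mathrm{id}$ by polarising the inner symmetric power. Concretely, I would set up the square
\[
\xymatrix{
\Sym^a\Sym^c V \ar[rr]^{\Psi_{a,c}} \ar[d]_{\varphi} && \Sym^c\Sym^a V \inj[d]^{\delta} \\
\Sym^a\Sym^{c-1}V \otimes \Sym^a V \ar[rr]^{\Psi_{a,c-1}\otimes\mathrm{id}} && \Sym^{c-1}\Sym^a V \otimes \Sym^a V,
}
\]
where $\delta\colon\Sym^cW\to\Sym^{c-1}W\otimes W$ (with $W=\Sym^aV$) is the comultiplication, i.e.\ the restriction of $\Sym^cW\hookrightarrow W^{\otimes c}=W^{\otimes(c-1)}\otimes W\twoheadrightarrow\Sym^{c-1}W\otimes W$, and $\varphi$ is $\Sym^a$ applied to the analogous comultiplication $\Sym^cV\to\Sym^{c-1}V\otimes V$, followed by the projection $\Sym^a(\Sym^{c-1}V\otimes V)\twoheadrightarrow\Sym^a\Sym^{c-1}V\otimes\Sym^aV$ onto the top term of the Cauchy decomposition $\Sym^a(A\otimes B)=\bigoplus_{\mu\vdash a}S_\mu A\otimes S_\mu B$. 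Over $\IC$ the map $\delta$ is split injective (multiplication, up to a scalar, retracts it), and $\Psi_{a,c-1}\otimes\mathrm{id}$ is injective because $\Psi_{a,c-1}$ is and tensoring with a vector space is exact. Hence, granting that the square commutes and that $\varphi$ is injective, the conclusion is immediate: $\Psi_{a,c}(x)=0$ forces $(\Psi_{a,c-1}\otimes\mathrm{id})(\varphi(x))=\delta(\Psi_{a,c}(x))=0$, so $\varphi(x)=0$, so $x=0$.

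Thus the proof reduces to two lemmas. (i) \emph{The square commutes.} All four arrows, together with $\iota$, $\varrho$, $r$ and the Cauchy projection, are restrictions or corestrictions of canonical operations on the plain tensor power $V^{\otimes ac}$ — averaging over Young subgroups, reindexing tensor slots, and splitting off one slot from each length‑$c$ block — so this is a bookkeeping check: trace a decomposable symmetric tensor through both composites and compare the sequences of idempotents applied, being careful that the ``diagonal'' copy of $\aS_a$ sitting inside $\Sym^a(\Sym^{c-1}V\otimes V)$ matches up correctly with the two independent copies of $\aS_a$ in $\Sym^a\Sym^{c-1}V\otimes\Sym^aV$ (this is the point at which one may have to adjust the precise form of $\varphi$). (ii) \emph{$\varphi$ is injective.} This is the real obstacle, and it is not a formality: $\Sym^a$ of the (injective) comultiplication $\Sym^cV\to\Sym^{c-1}V\otimes V$ genuinely spills out of the top Cauchy summand, and for $c$ small relative to $a$ — already $a=5,\ c=2$, where $\dim\Sym^5\Sym^2V>\dim(\Sym^5V)^{\otimes2}$ for large $\dim V$ — the target is simply too small for $\varphi$ to be injective at all. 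So injectivity of $\varphi$, and with it the theorem, genuinely uses that one stays in the range $c>a$, i.e.\ that the base index $b-1$ is at least $a$, which is the regime in which the statement is applied.

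To prove $\varphi$ injective in that range I would argue via powers. Since $\Sym^a\Sym^cV$ is spanned by the $f^a$ with $f\in\Sym^cV$, and in coordinates $\varphi(f^a)=\sum_{|\beta|=a}\binom{a}{\beta}\bigl(\textstyle\prod_i(\partial_if)^{\beta_i}\bigr)\otimes x^\beta$, the problem is to recover $\sum_k\lambda_kf_k^a$ from the data $\sum_k\lambda_k\prod_i(\partial_if_k)^{\beta_i}$ ($|\beta|=a$), which one repackages as the identity $\sum_k\lambda_k\bigl(\sum_i t_i\,\partial_if_k\bigr)^a=0$ in auxiliary variables $t$; Euler's relation $\sum_i x_i\,\partial_if=c\,f$ together with the room afforded by $c>a$ should then let one extract $\sum_k\lambda_kf_k^a=0$. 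Dually this is the statement that the ``multiply and re‑symmetrise'' map $\Sym^a\Sym^{c-1}V\otimes\Sym^aV\to\Sym^a\Sym^cV$ is surjective, which is plausible because forms with a linear factor span $\Sym^cV$ and, for $c>a$, there is enough freedom to pull the $a$ symmetric factors apart; converting this heuristic into a proof is the technical heart of the argument, and is the step I expect to be hardest.
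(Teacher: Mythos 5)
The paper does not actually prove Theorem~\ref{thm:main}; it quotes it from McKay \cite{McK:08}, where the statement is established by working with the corresponding maps of permutation modules for the symmetric group. Measured on its own terms, your reduction is fine as far as it goes: the square does commute up to a nonzero constant (check it on the spanning vectors $\ell_1^c\cdots\ell_a^c$, for which $\Psi_{a,c}(\ell_1^c\cdots\ell_a^c)=(\ell_1\cdots\ell_a)^c$, and both composites return a multiple of $(\ell_1\cdots\ell_a)^{c-1}\otimes(\ell_1\cdots\ell_a)$), $\delta$ is split injective, and tensoring with $\Sym^aV$ is exact. But this means the entire content of the theorem has been pushed into your lemma (ii), the injectivity of $\varphi$ --- dually, the surjectivity of the ``re-pair and multiply'' map $\Sym^a\Sym^{c-1}V\otimes\Sym^aV\to\Sym^a\Sym^cV$, i.e.\ the assertion that the variety of degree-$c$ forms divisible by a linear form has no degree-$a$ equations --- and you do not prove it; you explicitly leave it as a heuristic. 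That is a genuine gap, not a deferred verification: the lemma is a plethysm statement of the same order of difficulty as the theorem, and it is false without a lower bound on $c$ relative to $a$ (your own dimension count at $a=5$, $c=2$; note also that the unrestricted one-step implication would propagate injectivity from $\Psi_{5,1}=\mathrm{id}$ all the way to $\Psi_{5,5}$, contradicting \cite{MN:05}, so any correct argument must identify and use the threshold, which your sketch never does).

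The Euler-relation idea does not close this. Substituting $t_i\mapsto x_i$ recovers $c^a\sum_k\lambda_k f_k^a$ only from the full element $\Sym^a(\delta_V)\bigl(\sum_k\lambda_k f_k^a\bigr)\in\Sym^a(\Sym^{c-1}V\otimes V)$, whose injectivity is trivial and useless here; after you project onto the Cauchy summand $\Sym^a\Sym^{c-1}V\otimes\Sym^aV$ --- which you must do to invoke $\Psi_{a,c-1}\otimes\mathrm{id}$ --- the pairing between the $\partial_i f_k$-slots and the $x$-slots is destroyed. The only equivariant way back, summing over all re-pairings and multiplying, composed with $\varphi$ gives a $\GL(V)$-equivariant operator on $\Sym^a\Sym^cV$ that can a priori annihilate whole isotypic components; showing it does not, in the range of $c$ you need, is exactly the missing content. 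So as written the proposal reduces McKay's theorem to an unproven lemma of comparable depth; to become a proof it would need that lemma established --- for instance by the double-coset/permutation-module analysis McKay actually performs, or by an honest argument that forms with a linear factor admit no equations in the relevant degrees --- neither of which is supplied beyond plausibility.
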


Theorem~\ref{thm:main} allows us to verify Conjecture~\ref{conj:foulkes} in infinitely many cases while only doing a finite calculation:
\cite{MN:05} calculate that $\Psi_{4 \times 4}$ is injective, so Theorem~\ref{thm:main} implies the following corollary.
\begin{corollary}\label{cor:computation}
Conjecture~\ref{conj:foulkes} is true for $a = 4$.
\end{corollary}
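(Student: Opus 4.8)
The plan is simply to combine the two ingredients that the introduction has just assembled: the base-case computation of M\"uller and Neunh\"offer \cite{MN:05} that $\Psi_{4,4}$ is injective, and McKay's propagation theorem (Theorem~\ref{thm:main}).

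First I would record explicitly the observation already noted above: for a fixed pair $a \le b$, injectivity of the canonical map $\Psi_{a,b}\colon \Sym^a\Sym^b V \to \Sym^b\Sym^a V$ is a sufficient condition for Conjecture~\ref{conj:foulkes} for that pair. Indeed, both $\Sym^a\Sym^b V$ and $\Sym^b\Sym^a V$ are polynomial $\GL(V)$ representations of degree $ab$, hence direct sums of irreducibles $\{\la\}$; an equivariant map carries the $\la$-isotypic component of the source into that of the target (Schur's lemma kills $\GL(V)$-morphisms between non-isomorphic irreducibles), so if $\Psi_{a,b}$ is injective then its restriction to each isotypic component is injective, forcing the multiplicity of $\{\la\}$ in $\Sym^a\Sym^b V$ to be at most its multiplicity in $\Sym^b\Sym^a V$.

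Next I would fix $a = 4$ and invoke Theorem~\ref{thm:main} with $b = 5$: since $\Psi_{4,4} = \Psi_{4,(5-1)}$ is injective by \cite{MN:05}, the theorem gives that $\Psi_{4,c}$ is injective for all $c \ge 5$. Adjoining the base case $c = 4$ itself, we get that $\Psi_{4,b}$ is injective for every $b \ge 4$. Because the hypothesis $a \le b$ of Conjecture~\ref{conj:foulkes} forces $b \ge 4$ whenever $a = 4$, this exhausts all relevant pairs, and by the previous paragraph Conjecture~\ref{conj:foulkes} holds for $a = 4$.

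There is no genuine obstacle here, since both nontrivial inputs --- the finite computer verification of \cite{MN:05} and the propagation Theorem~\ref{thm:main} --- are taken as given; the only thing to be careful about is the bookkeeping. One must check that the single directly-verified case $b = 4$ together with the propagated range $c \ge 5$ really covers the whole range $b \ge a = 4$ required by the conjecture (it does, by an off-by-one count), and that the injectivity of the one distinguished map $\Psi_{4,b}$ genuinely suffices, rather than merely the abstract existence of some $\GL(V)$-equivariant inclusion \eqref{eq:foulkesconj} (it does, by the Schur's lemma argument above).
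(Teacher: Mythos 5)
Your proposal is correct and matches the paper's argument exactly: the paper derives this corollary by combining the computation of \cite{MN:05} that $\Psi_{4,4}$ is injective with Theorem~\ref{thm:main} (applied with $b=5$), using the fact, noted around \eqref{eq:foulkesconj}, that injectivity of the equivariant map $\Psi_{a,b}$ yields the multiplicity inequality. Your extra care about the Schur's lemma reduction and the $c\ge 5$ versus $b=4$ bookkeeping only makes explicit what the paper leaves implicit.
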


Our main result is the following theorem.
\begin{theorem}\label{cor:bettercomputation}
Conjecture~\ref{conj:foulkes} is true for $a = 5$.
\end{theorem}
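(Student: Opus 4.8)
The plan is to deduce Theorem~\ref{cor:bettercomputation} from one finite computation together with McKay's propagation Theorem~\ref{thm:main}. Recall first that injectivity of a $\GL(V)$-equivariant map $\Sym^a\Sym^b V \to \Sym^b\Sym^a V$ implies, by Schur's lemma, the multiplicity inequality of Conjecture~\ref{conj:foulkes} for the pair $(a,b)$ (equivalently, it produces the inclusion \eqref{eq:foulkesconj}); so injectivity of the Foulkes--Howe map $\Psi_{a,b}$ suffices for that case. For $a = 5$ it therefore suffices to treat all $b \geq 5$. The case $b = 5$ is vacuous, since the inequality reads $\mult(\{\la\},\Sym^5\Sym^5 V) \leq \mult(\{\la\},\Sym^5\Sym^5 V)$; note that one cannot go through $\Psi_{5,5}$ here, as $\Psi_{5,5}$ has a nontrivial kernel by \cite{MN:05}. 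The decisive step is to verify by computer that $\Psi_{5,6}$ is injective. Given this, the case $b = 6$ is immediate, and Theorem~\ref{thm:main} applied with $a = 5$, $b = 7$ yields injectivity of $\Psi_{5,c}$ for every $c \geq 7$, which disposes of all remaining cases.

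The heart of the argument is thus to prove that $\Psi_{5,6} \colon \Sym^5\Sym^6 V \to \Sym^6\Sym^5 V$ is injective. One cannot work with the ambient spaces, which are enormous, so I would pass to multiplicity spaces: by Schur's lemma $\Psi_{5,6}$ is injective if and only if, for every partition $\la$ of $30$ occurring in $\Sym^5\Sym^6 V$ (finitely many, each with at most $5$ rows, for $\dim V$ large enough), the induced map from the multiplicity space of $\{\la\}$ in $\Sym^5\Sym^6 V$ to that in $\Sym^6\Sym^5 V$ is injective. I would realize these multiplicity spaces as spaces $\HWV_\la$ of highest weight vectors, describe explicit (possibly linearly dependent) spanning sets of them indexed by suitable tableaux --- the ``symmetrizing tableaux'' of the title --- so that, for each $\la$, the restriction of $\Psi_{5,6}$ to $\HWV_\la$ becomes an explicit integer matrix, and finally certify that every such matrix has full column rank. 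It is enough to do this rank check modulo one suitably chosen prime $p$, since full column rank modulo $p$ forces full column rank over $\IC$; this keeps the entire computation in exact arithmetic.

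The main obstacle is the sheer size of this computation even after the reduction: there are many partitions $\la \vdash 30$ with at most five rows, and, while the matrices on the $\HWV_\la$ are vastly smaller than the ambient spaces, they are still large, so a sufficiently compact combinatorial model of $\HWV_\la$ and of the action of $\Psi_{5,6}$ --- and a good choice of spanning tableaux --- is what makes the rank certification tractable. A subtler point to get right is the set-up of the reduction itself: one must confirm that no partition with more than five rows needs to be considered, that the case $b = 5$ really requires nothing (rather than the non-injective $\Psi_{5,5}$), and that the combinatorial families genuinely span the relevant $\HWV$ spaces. Once the full-rank certificate for $\Psi_{5,6}$ is in hand, Theorem~\ref{cor:bettercomputation} follows from Theorem~\ref{thm:main} as above; and the same tableau computations simultaneously produce the Chow-variety statements from the abstract --- the degree-$5$ part of the ideal of the $5$th Chow variety (which is $\ker\Psi_{5,5}$), the absence of degree-$5$ equations for the $6$th Chow variety (which is exactly the injectivity of $\Psi_{5,6}$), and some degree-$6$ equations for the $6$th Chow variety (a piece of $\ker\Psi_{6,6}$).
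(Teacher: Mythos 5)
Your proposal is correct and follows essentially the same route as the paper: the case $b=5$ is trivial because $\Sym^5\Sym^5 V = \Sym^5\Sym^5 V$, the computer verification that $\Psi_{5,6}$ is injective (via reduction to highest weight spaces spanned by symmetrized tableaux and exact rank computations) handles $b=6$, and McKay's Theorem~\ref{thm:main} then propagates injectivity to all $c \geq 7$. The only incidental difference is your suggestion to certify full column rank modulo a prime, whereas the paper works with exact integer (bignum) arithmetic on evaluation matrices; both are valid certificates.
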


We obtain this result as a consequence of the first part of the following theorem.
\begin{theorem}\label{thm:maininj}

\begin{enumerate}
\item[(a)] $\FH_{5,6}$ is injective.
\item[(b)]
As a $\GL(V)$-representation the kernel of $\FH_{5,5}$ decomposes as
\begin{eqnarray*}
\ker \FH_{5,5} &=& 
\{(14, 7, 2, 2)\} \oplus \{(13, 7, 2, 2, 1)\} \oplus \{(12, 7, 3, 2, 1)\} \oplus \{(12, 6, 3, 2, 2)\} \\
&\oplus& \{(12, 5, 4, 3, 1)\} \oplus \{(11, 5, 4, 4, 1)\} \oplus \{(10, 8, 4, 2, 1)\} \oplus \{(9, 7, 6, 3)\}.
\end{eqnarray*}

\item[(c)]
Let $S$ denote the set of partitions $\la$ that satisfy
\begin{itemize}
 \item $\la$ has 36 boxes,
 \item $\la$ has 6 rows (i.e., $\la_6\neq 0$) and
 \item $\la$ appears with multiplicity exactly 1 in $\Sym^6\Sym^6 V$.
\end{itemize}
The following types appear in $\ker\FH_{6,6}$ (of course with multiplicity 1), and no other
partition from $S$ appears in $\ker\FH_{6,6}$:
\begin{eqnarray*}
&&(20,7,6,1,1,1),
(9,8,8,7,3,1),
(16,7,5,3,3,2),
(15,9,4,3,3,2),\\
&&(11,6,6,6,4,3),
(11,9,7,3,3,3).
\end{eqnarray*}

\end{enumerate}
\end{theorem}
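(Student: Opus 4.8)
The three assertions are ultimately the output of a large computer calculation, so the plan is to reduce each of them to a finite exact linear-algebra problem and then to organise that problem so that it is actually feasible to run. First I would reduce everything to highest weight vectors. Since $\FH_{a,b}$ is $\GL(V)$-equivariant, Schur's lemma shows that $\ker\FH_{a,b}$ is a $\GL(V)$-subrepresentation and that $\FH_{a,b}$ restricts, for every partition $\la$, to a linear map $\FH_{a,b}^{\la}\colon \HWV_{\la}(\Sym^a\Sym^b V)\to \HWV_{\la}(\Sym^b\Sym^a V)$ between the spaces of highest weight vectors of weight $\la$, with the multiplicity of $\{\la\}$ in $\ker\FH_{a,b}$ equal to $\dim\ker\FH_{a,b}^{\la}$. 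Hence $\FH_{a,b}$ is injective if and only if each $\FH_{a,b}^{\la}$ is injective, and the full $\GL(V)$-decomposition of $\ker\FH_{a,b}$ is recovered by computing all the numbers $\dim\ker\FH_{a,b}^{\la}$. Only partitions with $\ell(\la)\le\min\{a,b\}$ can contribute, and one may take $\dim V=\ell(\la)$, which keeps every ambient space small.

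Next I would build an explicit model for the maps $\FH_{a,b}^{\la}$. Via Schur--Weyl duality one identifies $\HWV_{\la}(\Sym^a\Sym^b V)$ with the space of $\aS_b\wr\aS_a$-invariants in the Specht module $S^{\la}$, and $\FH_{a,b}^{\la}$ with the operator averaging over a suitable conjugate of $\aS_a\wr\aS_b$ inside $\aS_{ab}$; equivalently one works in the weight-$\la$ subspace of $\Sym^a\Sym^b V$ with $\dim V=\ell(\la)$ and intersects the kernels of the raising operators. Either way one gets an integer matrix whose rank yields $\dim\ker\FH_{a,b}^{\la}=\dim\HWV_{\la}(\Sym^a\Sym^b V)-\operatorname{rank}$. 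The combinatorial model of symmetrizing tableaux---the device that gives the paper its title---is what makes this practical: it replaces the enormous ambient tensor space $\tensor^{ab}V$ by an explicit, far smaller combinatorial index set for a spanning set of $\HWV_{\la}$, together with a combinatorial rule for the action of $\FH_{a,b}$ (a sum over coset representatives coming from the symmetrization $\varrho$), so the matrices can actually be generated and row-reduced.

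With this in place, the three parts are separate runs. For (a) one enumerates all $\la\vdash 30$ with $\ell(\la)\le 5$, forms $\FH_{5,6}^{\la}$, and checks that its kernel is trivial in every case; combined with Theorem~\ref{thm:main} this settles the Foulkes conjecture for $a=5$, since $\FH_{5,6}=\FH_{5,(7-1)}$ being injective gives $\FH_{5,c}$ injective for all $c\ge 7$, part~(a) itself covers $b=6$, and $b=5$ is the trivial identity map, so Theorem~\ref{cor:bettercomputation} follows. For (b) one runs the same computation for $a=b=5$, recording $\dim\ker\FH_{5,5}^{\la}$ for every $\la\vdash 25$ with $\ell(\la)\le 5$; the output is that exactly the eight listed partitions occur, each with a one-dimensional kernel, which refines the non-injectivity of $\FH_{5,5}$ already found in \cite{MN:05} by identifying the $\GL(V)$-types. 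For (c) one first computes enough of the plethysm $\Sym^6\Sym^6 V$ (equivalently of the symmetric function $h_6[h_6]$) to pin down the set $S$ of partitions of $36$ with six rows and plethysm-multiplicity exactly $1$; for each $\la\in S$ the space $\HWV_{\la}(\Sym^6\Sym^6 V)$ is one-dimensional, so $\FH_{6,6}^{\la}$ is a single scalar, and one checks that it vanishes precisely for the six listed partitions.

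The representation theory above is routine; the hard part will be scale. For $a=5$, $b=6$ there are several hundred relevant $\la\vdash 30$, some with $\dim\HWV_{\la}$ in the hundreds, and the naive realisations---the space $\tensor^{30}V$, or even a single Specht module, whose dimension can reach into the millions---are far too large to manipulate directly; making the rank computations go through needs the reduced symmetrizing-tableaux model, sparse linear algebra, and computation over a finite field (with either several primes or a certified rational lift). For (c) there is the separate bottleneck of evaluating a degree-$36$ plethysm accurately enough to isolate its multiplicity-one part. Since the final lists depend on the correctness of the implementation, the appropriate certification is an independent re-implementation together with release of the source code.
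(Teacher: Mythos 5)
Your reduction is the same as the paper's: use equivariance and Schur's lemma to split $\ker\FH_{a,b}$ into $\la$-isotypic pieces, compute $\dim\ker\FH_{a,b}^{\la}$ on highest weight vectors for each relevant $\la$, use $(a,b)$-symmetrized tableaux as a spanning set with a combinatorial rule for $\FH_{a,b}$, and deduce Theorem~\ref{cor:bettercomputation} from part (a) via McKay's propagation theorem exactly as the paper does; your reading of part (c) (one-dimensional HWV spaces, so $\FH_{6,6}^{\la}$ is a scalar whose vanishing is tested) is also the paper's. Where you diverge is the linear-algebra engine. You propose to realize $\FH_{a,b}^{\la}$ as an explicit matrix in tableau/Specht or weight-space coordinates and row-reduce it (sparse, over finite fields with a certified lift); the paper explicitly remarks that expressing images in the semistandard basis requires the straightening algorithm, which is far too slow at this size, and instead never writes the matrix in a basis at all. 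Its device is evaluation at points: it fixes $p'$ integer points $v'_j\in\Sym^aV$, each a sum of powers of linear forms, chosen together with a basis $\{f'_i\}$ of $\HWV_\la(\Sym^b\Sym^aV)$ so that the $p'\times p'$ matrix $\bigl(f'_i(v'_j)\bigr)$ is invertible; then a highest weight vector in the target is faithfully encoded by its evaluation vector, and $\dim\ker\FH_{a,b}^{\la}$ is the kernel dimension of the $p'\times p$ integer matrix $\bigl(\FH_{a,b}(f_i)(v'_j)\bigr)$, where $p,p'$ are just the plethysm multiplicities. Each entry is computed exactly over $\IZ$ (bignum arithmetic, so no finite-field reduction or probabilistic rank is needed) as a pruned depth-first sum of products of precached determinants, exploiting Grassmann vanishing, column symmetries via the Pl\"ucker relations, and parallelization. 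So your plan buys a conceptually cleaner matrix model but runs into exactly the scalability obstacle the paper flags; the paper's evaluation-at-points scheme is the extra idea that makes the computation feasible while remaining symbolic.
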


Although $\Psi_{5,5}$ is not injective, if $a=b$, then $\Sym^a \Sym^b V = \Sym^b \Sym^a V$.
Therefore Theorem~\ref{thm:main} and Theorem~\ref{thm:maininj}(a) together imply Theorem~\ref{cor:bettercomputation}.

\begin{remark}
Regarding $\Sym^b V$ as polynomials over $V$, let $\Ch_b:=\overline{\GL_V (X_1 X_2 \cdots X_b)} \subseteq \Sym^b V$ denote the $\GL_V$ orbit closure of the monomial $X_1 X_2 \cdots X_b$.
We call this orbit closure the $b$th Chow variety.
The kernel of $\Psi_{a,b}$ is the homogeneous degree $a$ part of the vanishing ideal of $\Ch_b$,
as was shown by Hadamard (see e.g.~\cite[Section~8.6]{Lan:11}).
This interpretation is what greatly inspires the algorithm described in this paper, which is used to obtain the results in Theorem~\ref{thm:maininj}.

Moreover, we remark that $\Ch_b$ is an important subvariety of the determinant orbit closure
in geometric complexity theory \cite{Lan:11}, \cite{Kum:12}, \cite{BHI:15}:
an approach towards resolving major open questions in theoretical computer science
via geometry and representation theory.

Theorem~\ref{thm:maininj} can be interpreted as follows: There are no equations for $\Ch_6$ up to degree 5,
the vanishing ideal of $\Ch_5$ in degree 5 is identified as a $\GL_V$-representation,
and some parts of the vanishing ideal of $\Ch_6$ in degree 6 are determined. \hfill\scalebox{0.8}{$\blacksquare$}\end{remark}

For more information on the history of the Foulkes conjecture we refer the interested reader to \cite[Section~7.1]{Lan:15}.

We note that the result in Theorem~\ref{thm:maininj}(b)
uncovers the precise structure of $\ker\Psi_{5,5}$, which is slightly different from the result
``with high probability'' in \cite[(6.1.4)]{ike:12b}.

\subsubsection*{Organization of the paper}
As stated above, Theorem~\ref{cor:bettercomputation} follows from McKay's theorem and Theorem~\ref{thm:maininj}. The rest of the paper is devoted to the the algorithm behind the computations in Theorem~\ref{thm:maininj}. In Section~\ref{sec:reducetohwv} we use representation theoretic considerations to reduce the problem of computing $\ker\Psi_{a,b}$ to computing the dimension of the kernel of the restriction of $\Psi_{a,b}$ to the subspace of $\Sym^a\Sym^b V$ of highest weight vectors of type $\lambda$ for all $\lambda$. In Section \ref{sec:naive} we give a detailed description and a justification of the algorithm to compute this dimension. An overview is given in Section~\ref{subsec:overview}. While the algorithm includes various mathematical optimizations, the problem is still computationally too big to be feasible. In Section~\ref{sec:speedup} we describe various techniques we used to speed up the algorithm and make the calculation feasible. In Section~\ref{sec:implandrun} we give remarks on the implementation and running time of the algorithm.

\subsection*{Acknowledgments}
We thank JM Landsberg for bringing this problem to our attention and for valuable discussions.
This work was initiated at the 2012 Mathematics Research Communities \emph{Geometry and Representation Theory Related to Geometric Complexity and Other Variants of P v. NP} in Snowbird, Utah. We thank Texas A\&M University for its hospitality for a one week collaboration effort funded by an AMS MRC Collaboration grant.

\section{Reduction to highest weight vectors}
\label{sec:reducetohwv}
Our goal is to compute the kernel of the $\GL(V)$-morphism $\FH_{a,b}$ and decompose it as a $\GL(V)$-representation.
Using Schur's lemma we can treat each $\GL(V)$-isomorphism type $\lambda$ independently
and calculate the kernel of the restriction $\FH_{a,b}^{\la\text{-iso}}$ of $\FH_{a,b}$ to the $\la$-isotypic component of $\Sym^a\Sym^b V$:
\[
\ker \FH_{a,b} = \bigoplus_{\la} \ker\FH_{a,b}^{\la\text{-iso}},
\]
where the sum is over all partitions $\la$ with $|\la|:=\sum_{i} \la_i = ab$.
The following definition simplifies our computation.
\begin{definition}\label{def:hwv}
 Let $\mathcal U \subseteq \GL(V)$ denote the group of upper triangular matrices with 1s on the main diagonal.
 We denote by $\diag(\alpha_1,\ldots,\alpha_{\dim V})$ the diagonal matrix with complex numbers $\alpha_1,\ldots,\alpha_{\dim V}$ on the main diagonal.
 Let $W$ be a polynomial $\GL(V)$-representation.
 Let $\la$ be a partition.
 A vector $f \in W$ that is invariant under the action of $\mathcal U$ and that satisfies
 \[
 \diag(\alpha_1,\ldots,\alpha_{\dim V}) \cdot f = \alpha_1^{\la_1} \cdots \alpha_{\dim V}^{\la_{\dim V}} f
 \]
 is called a \emph{highest weight vector of type $\la$}.\hfill\scalebox{0.8}{$\blacksquare$}\end{definition}

Let $W := \Sym^a\Sym^b V$ and let $W':=\Sym^b\Sym^a V$.
Let $\HWV_\la(W)$ denote the vector space of highest weight vectors of type $\la$ in~$W$.
Since every irreducible $\GL(V)$-representation has a unique highest weight vector (up to scale),
we only have to check the kernel of the restriction $\FH_{a,b}^\la:\HWV_\la(W)\to\HWV_\la(W')$ of $\FH_{a,b}^{\la\text{-iso}}$ to $\HWV_\la(W)$:
\[
\ker \FH_{a,b}^{\la\text{-iso}} = \text{linspan} \{g f \mid g \in \GL(V), f \in \ker \FH_{a,b}^\la\}.
\]
So $\ker \FH_{a,b}^{\la\text{-iso}}$ is a $\la$-isotypic $\GL(V)$ representation
that contains the irreducible representation of type $\la$
with a multiplicity of $\dim\ker \FH_{a,b}^\la$.
We write:
\[
\mult_\la(\ker \FH_{a,b}) = \dim\ker \FH_{a,b}^\la.
\]
Hence we are left with determining $\dim\ker \FH_{a,b}^\la$.

Let $p := \dim\HWV_\la(W)$ and $p':= \dim\HWV_\la(W')$. In our cases we have $p\leq p'$ (otherwise this would disprove Conjecture~\ref{conj:foulkes}).
To compute $\dim\ker \FH_{a,b}^\la$ we first determine a basis $\{f_i\}_{1 \leq i \leq p}$ of $\HWV_\la(W)$
and a basis $\{f'_i\}_{1 \leq i \leq p'}$ of $\HWV_\la(W')$.
Using these bases we can write down the $p' \times p$ transformation matrix of $\FH_{a,b}^\la$ and determine its kernel with linear algebra.

\section{The algorithm}\label{sec:naive}
\subsection{Overview}\label{subsec:overview}
In our cases for every $\la$ the natural numbers $p$ and $p'$ can be readily computed using the computer programs \textsf{Schur}\footnote{online available at \url{http://sourceforge.net/projects/schur}} or \textsf{LiE}\footnote{online available at \url{http://www-math.univ-poitiers.fr/~maavl/LiE}}.
We then construct a basis $\{f_1,\ldots,f_p\}$ of $\HWV_\la(W)$ by
choosing more and more ``random'' elements in $\HWV_\la(W)$ and by checking their linear independence until we have $p$ many linearly independent ones.
In the same way we construct a basis $\{f'_1,\ldots,f'_{p'}\}$ of $\HWV_\la(W')$,
but this time we construct it together with a set of $p'$ distinct points $\{v'_1,\ldots,v'_{p'}\}, v'_j \in \Sym^a V$,
such that the $p' \times p'$ matrix $\Big(f'_i(v'_j)\Big)_{i,j}$ of function evaluations has full rank.
This means that every element $f'$ of $\HWV_\la(W')$ is uniquely defined by its evaluation list $(f'(v'_1),\ldots,f'(v'_{p'}))$. Note that here the $f_i\in \Sym^b\Sym^a V$ are regarded as functions on $\Sym^a V$ as opposed to vectors from the tensor product space.

Then we apply $\FH_{a,b}$ to $f_1,\ldots,f_p$ and determine the rank of the $p' \times p$ matrix
given by the function evaluations $\Big(\FH_{a,b}(f_i)(v'_j)\Big)_{i,j}$.
The kernel of $\FH_{a,b}^\la$ is given precisely by the kernel of this matrix.
In other words a linear combination $\sum_{i=1}^p \alpha_i \FH_{a,b}(f_i)$ vanishes identically iff
$\sum_{i=1}^p \alpha_i \FH(f_i)(v'_j)=0$ for all $j$.

\subsection{Numerical considerations}
Even though at a first glance the method described in Section~\ref{subsec:overview} might look like it works only ``with high probability''
or ``numerically'', it gives \emph{symbolic} results.
The key property that we use is that the $p' \times p'$ matrix $\Big(f'_i(v'_j)\Big)_{i,j}$ has full rank by construction,
so that a function in $\HWV_\la(W')$ is uniquely determined by its evaluations at the points $v'_1,\ldots,v'_{p'}$.

We will construct the functions $f_i$, $f'_i$ over the integers and we will also do so for the points $v'_j$.
Therefore all evaluation results are integers.
Integer calculations can be handled without any numerical errors by software with bignum arithmetic support.

\subsection{Constructing the highest weight vector bases}\label{subsec:tableaux}
In this section we describe how to obtain the functions $f_i$, $1 \leq i \leq p$ and $f_i'$, $1 \leq i \leq p'$.
Since their number of monomials is very large,
we will not compute the monomials, but we will provide a method to evaluate the functions without writing them down.
This is the same phenomenon as for the classical $n \times n$ determinant,
which has $n!$ summands but which can be computed in polynomial time.

\subsubsection{A basis for the tensor power}

A \emph{Young diagram} is a left- and top-aligned array of boxes.
We identify partitions with Young diagrams by interpreting the entries in the partition as row lengths.
For example, the Young diagram corresponding to the partition $(4,2)$ is
\[
\yng(4,2).
\]
A \emph{Young tableau} is a Young diagram in which we write a natural number in each box.
Different boxes can have the same entry.
A Young tableau is called \emph{semistandard} if
the numbers increase from top to bottom in each column and never decrease from left to right in each row.
An example of a semistandard Young tableau is the following:
\begin{equation}\label{eq:semistd}
\Yvcentermath1\young(1133,22).
\end{equation}
If a Young tableau $T$ with $ab$ boxes has exactly $b$ 1s, exactly $b$ 2s, $\ldots$, and exactly $b$ $a$s,
then we say that $T$ has \emph{rectangular content $a \times b$}.
The example above has rectangular content $3 \times 2$.
A special case are the Young tableaux with $d$ boxes and rectangular content $d \times 1$:
Each number from $1,\ldots,d$ appears exactly once.
A semistandard Young tableau with rectangular content $d \times 1$ is called a \emph{standard tableau}.

We want to construct a basis of $\HWV_\la(\Sym^a \Sym^b V)$.
There is a nice basis of $\HWV_\la(\tensor^a \Sym^b V)$ 
that is obtained from a nice basis of $\HWV_\la(\tensor^{ab} V)$ by a process we call \emph{inner symmetrization}.
We describe this basis of $\HWV_\la(\tensor^{ab} V)$ now.
Let $\{e_i\}_{1 \leq i \leq \dim V}$ be the standard basis of $V$.
We start with a canonical construction of a highest weight vector of type $\la$ in $\tensor^{ab}V$:
For each column of length $k$ in $\lambda$ we take the wedge product $e_1 \wedge e_2 \wedge \cdots \wedge e_k$
and tensor them together from left to right to obtain the tensor $\zeta_\la \in \tensor^{ab}V$.
For example, for the partition $(4,2)$ we get
\begin{equation}\label{eq:def:fla}
\zeta_{(4,2)} := (e_1 \wedge e_2) \otimes (e_1 \wedge e_2) \otimes e_1 \otimes e_1
\end{equation}
and for the partition $(7,3,2)$ we get
\begin{equation*}
\zeta_{(7,3,2)} := (e_1 \wedge e_2 \wedge e_3) \otimes (e_1 \wedge e_2 \wedge e_3) \otimes (e_1 \wedge e_2) \otimes e_1 \otimes e_1 \otimes e_1 \otimes e_1.
\end{equation*}
The vector $\zeta_\la$ is a highest weight vector of type $\la$ in $\tensor^{ab}V$, which is easy to verify (recall Definition~\ref{def:hwv}).
The group $\aS_{ab}$ acts on $\tensor^{ab}V$ by permuting the tensor factors.
Its action commutes with the action of $\GL(V)$.
Therefore, for all $\pi \in \aS_{ab}$ we have that $\pi \zeta_\la$ is a highest weight vector of type $\la$.
Indeed, Schur-Weyl duality (see e.g.\ \cite[Ch.~9, eq.~(3.1.4)]{Pro:07} or \cite[eq.~(9.1)]{gw:09}) says that the set $\{\pi \zeta_\la \mid \pi \in \aS_{ab}\}$
is a generating set for $\HWV_\la(\tensor^{ab}V)$.

Let $T_\la$ denote the \emph{column-standard tableau of shape $\la$}, i.e., the standard tableau of shape $\la$ whose entries are consecutive in each column.
For example, for $\la=(4,2)$ we have
\[
\Yvcentermath1 T_{(4,2)} = \young(1356,24).
\]
The group $\aS_{ab}$ acts on the set of tableaux with rectangular content $(ab)\times 1$ by sending the entries $i$ to $\pi^{-1}(i)$.
We can identify $\pi T_\la$ with $\pi\zeta_\la$. Indeed, the numbers in the columns of $\pi T_\la$ are exactly the indices of the position that are wedged with each other in $\pi \zeta_\la$.
Since $\zeta_\la$ is a tensor product of special wedge products,
the linear relations among all $\pi \zeta_\la$ are given by
the linear relations among the principal minors of generic matrices:
The Grassmann and Pl\"ucker relations.
We can write these relations in terms of tableaux as follows:
\begin{itemize}
\item $T = -T'$ if $T$ and $T'$ differ in a single column by a single transposition. This is the Grassmann relation.
\item $T = \sum_{S} S$,
 where the sum is over all tableaux~$S$ that arise from~$T$ 
 by exchanging for some $j$ and $k$ the top $k$ elements from the $(j+1)$th column with any selection of 
 $k$~elements in the $j$th column, preserving their vertical order, see \cite[p.~110]{fult:97}.
 These are known as the Pl\"ucker relations.
\end{itemize}
For example, from the Pl\"ucker relations it follows that if $T$ and $T'$
arise from each other by exchanging two columns of the same length, then $T=T'$.
We will use this in Section~\ref{subsec:usingsymmetries}.

The Grassmann-Pl\"ucker relations can be used to identify a set of tableaux that form a basis of
$\HWV_\la(\tensor^{ab}V)$: Those are the standard tableaux, see \cite[p.~110]{fult:97}.
The algorithm that expresses a Young tableau with rectangular content $(ab) \times 1$ as a linear combination
over the set of standard tableaux is called \emph{straightening}.

\subsubsection{The inner and outer symmetrizations}

We now consider the canonical projection map
$s : \tensor^{ab} V \twoheadrightarrow \tensor^a\Sym^bV$,
which can be written as an element in the group algebra $\IC[\aS_{ab}]$:
\[
s := \frac 1{(b!)^a} \sum_{ \substack{ {\sigma_1 \in \aS_{\{1,2,\ldots,b\}}} \\ {\sigma_2 \in \aS_{\{b+1,b+2,\ldots,2b\}}} \\ \vdots \\ {\sigma_a \in \aS_{\{b(a-1)+1,b(a-1)+2,\ldots,ab\}}}}  }\sigma_1 \sigma_2 \cdots \sigma_a.
\]
We call $s$ the \emph{inner symmetrization map}.
A tableau $T$ is mapped to a sum of $(b!)^a$ tableaux, as illustrated in the following example where $a=3$ and $b=2$.
\begin{eqnarray*}
\Yvcentermath1s\bigg(\young(1256,34)\bigg) =  && \frac 1 8 \bigg(
\Yvcentermath1\young(1256,34) + \young(2156,34) + \young(1256,43) + \young(2156,43) \\
&&+ \Yvcentermath1\young(1265,34) + \young(2165,34) + \young(1265,43) + \young(2165,43)\bigg).
\end{eqnarray*}
Here the pairs $(1,2)$, $(3,4)$, and $(5,6)$ are symmetrized independently.

The cases in which $s(T)=0$ are those where positions are symmetrized that lie in the same column.
In all other cases it can be shown (for example by tensor contraction) that $s(T)\neq 0$ and we use a short notation for the resulting sum of tableaux:
In each summand, replace the first $a$ entries $1,\ldots,a$ with 1,
the second $a$ entries $a+1,\ldots,2a$ with 2, and so on.
Of course this gives the same tableau for each summand, so for example in the above example we obtain the tableau in \eqref{eq:semistd} for each of the 8 summands.
This tableau with rectangular content $a \times b$ shall now represent the sum $s(T)$.
If $T$ is standard and $s(T)\neq 0$, then $s(T)$ is semistandard.
The semistandard tableaux with rectangular content $a \times b$ form a basis of $\HWV_\la(\tensor^a\Sym^bV)$.
It is clear that on the space of tableaux with rectangular content $a\times b$ the Grassmann-Pl\"ucker relations hold and that the straightening algorithm can be used to express a Young tableau with rectangular content $a \times b$
over the set of semistandard tableaux with rectangular content $a \times b$.

There is no such nice basis known for $\HWV_\la(\Sym^a\Sym^bV)$.
This is an important open problem in representation theory:
Find a combinatorial description of $\dim\HWV_\la(\Sym^a\Sym^bV)$, which is called the \emph{plethysm coefficient}.
We only know that \[\left\{\sum_{\sigma \in \aS_a}\sigma T \,\middle|\, T \text{ semistandard of type $\la$ with rectangular content $a \times b$}\right\}\]
is a generating set of $\HWV_\la(\Sym^a\Sym^bV)$.
Since in our cases we have $a \leq 6$ and hence $|\aS_a|\leq 6!=720$, this is not a major computational problem.

For a given Young tableau $T$ with rectangular content $a \times b$ let us call $\varrho(T) := \frac 1 {a!} \sum_{\sigma \in \aS_a}\sigma T$
an \emph{$(a,b)$-symmetrized tableau}. We will often write $f = \varrho(T)$.
So the set of all $(a,b)$-symmetrized tableaux $f$ of shape $\la$ is a generating set of $\HWV_\la(\Sym^a\Sym^bV)$.
An $(a,b)$-symmetrized tableau shall be depicted by replacing the numbers by letters,
thinking of the set of letters as an unordered set. So, for example,
\[
\Yvcentermath1\varrho\bigg(\young(1133,22)\bigg) = \young(AACC,BB).
\]

\subsection{Applying the map}
Using the tableau descriptions from \ref{subsec:tableaux} we can describe the map
\[
\FH_{a,b}^\la : \HWV_\la(\Sym^a\Sym^bV) \to \HWV_\la(\Sym^b\Sym^aV)
\]
in terms of tableaux.
A generating set of the left hand side is given by the set of all $(a,b)$-symmetrized tableaux of shape $\la$,
while a generating set of the right hand side is given by the set of all $(b,a)$-symmetrized tableaux of shape $\la$.

\begin{proposition}[A combinatorial description of $\Psi_{a,b}$]\label{pro:combinat}
 Given an $(a,b)$-symmetrized tableaux $f$ of shape $\la$, $\FH^\la_{a,b}(f)$ equals the sum over all tableaux $S$ that are obtained
 by putting the numbers $1,\ldots,b$ on the positions of the A's in $T$ in any order,
and putting the numbers $1,\ldots,b$ on the positions of the B's in $T$ in any order,
and so on.
\end{proposition}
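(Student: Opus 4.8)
\emph{Proof strategy.}\quad The plan is to unwind $\FH_{a,b}=\varrho\circ r\circ\iota$ directly on $\tensor^{ab}V$, thinking of the $ab$ tensor factors as the cells of an $a\times b$ grid whose row~$i$ ($1\le i\le a$) carries the $i$-th copy of $\Sym^bV$ and whose $b$ cells in that row are its slots; concretely, cell $(i,j)$ is the tensor factor number $b(i-1)+j$. In this picture $\iota$ is (up to a normalizing scalar) the inclusion of the subspace of $\tensor^{ab}V$ invariant under permuting the $a$ rows and, independently, permuting the $b$ cells inside each row; the canonical reordering $r$ is exactly the permutation of the $ab$ factors that transposes the grid, sending cell $(i,j)$ to cell $(j,i)$ of the $b\times a$ grid; and $\varrho$ is the projection onto the subspace invariant under permuting the $b$ rows of the transposed grid and the $a$ cells inside each. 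So $\FH_{a,b}$ does exactly this: include, transpose the grid, re-symmetrize. All of it is made rigorous on vectors by expanding $\varrho(T)$ into its constituent reordered copies of $\zeta_\la$ and following the tensor factors one by one, but the combinatorics is entirely the grid bookkeeping.

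Next I would read off what a symmetrized tableau encodes here. A content $a\times b$ tableau $T$ records, for each box of $\la$, which of the $a$ rows of the grid the corresponding factor lies in, that is, to which of the $a$ copies of $\Sym^bV$ it belongs; the slot within the row has been averaged away by the inner symmetrization~$s$, and passing to $\varrho(T)$ additionally forgets which row is which, so all that remains of $T$ is the unordered letters, equivalently the partition of the $ab$ boxes into $a$ letter-classes of size~$b$. Now transpose the grid. The old row-index of a box becomes its new slot-within-row, i.e.\ a slot inside a $\Sym^aV$-copy, and the target inner symmetrization averages this away — so the target content $b\times a$ tableau does not see it, and the only trace of $T$ that constrains the output is its partition into letter-classes. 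Meanwhile the box's old slot-within-$\Sym^bV$, which was averaged away on the source side and hence is unconstrained, becomes its new row-index, i.e.\ which of the $b$ copies of $\Sym^aV$ it lies in, and this the target content tableau does record. Summing over the unconstrained choice, the image is precisely the sum over all content $b\times a$ tableaux $S$ obtained by distributing, within each letter-class of $T$, the labels $1,\dots,b$ bijectively onto its $b$ boxes — which is the recipe in the Proposition. Hence $\FH^\la_{a,b}(\varrho(T))=\sum_S S$.

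Two points finish the plan. First, no further outer symmetrization is needed on the target: the set of tableaux $S$ produced above is stable under relabelling $1,\dots,b$ (relabelling merely permutes this set), so $\sum_S S$ is already invariant under permuting the $b$ copies of $\Sym^aV$ and therefore lies in $\HWV_\la(\Sym^b\Sym^aV)$. Second, one must carry the normalizing constants of the four symmetrizers ($s$ and $\varrho$ on the source, and their analogues on the target) through the computation; they collapse into one nonzero scalar in front of $\sum_S S$, which plays no role in the sole application of the formula, the computation of $\ker\FH^\la_{a,b}$. The degenerate tableaux, where a column picks up a repeated entry and some symmetrization returns $0$, are harmless: they are zero on both sides.

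The step I expect to be the main obstacle is making the ``transpose the grid'' claim rigorous at the level of vectors: one must verify that under the identifications $\tensor^a\tensor^bV\cong\tensor^{ab}V\cong\tensor^b\tensor^aV$ the map $r$ really is the transposition permutation of the $ab$ factors, that on the symmetrized subspaces this permutation converts the row-datum of $T$ into a slot-datum annihilated by the target inner symmetrization, and that the source slot-datum — which was averaged out — reappears exactly as the free summation variable of the conclusion; and then to keep the four normalizations straight. Once this dictionary is set up, the Proposition follows at once.
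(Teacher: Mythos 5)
Your proposal is correct and follows essentially the same route as the paper's proof: expand the $(a,b)$-symmetrized tableau into permuted copies of $\zeta_\la$, observe that $r$ merely reorders the $ab$ tensor factors (your ``grid transposition''), apply the inner symmetrization on the target side, and note that the outer symmetrization there is superfluous because the resulting sum is already $\aS_b$-invariant. The only deviation is that you assert equality only up to an unspecified nonzero scalar, whereas with the paper's chosen normalizations of $s$ and $\varrho$ that scalar is $1$ (consistent with the worked example for $\FH_{3,2}$); pinning this down is a routine bookkeeping check, not a gap.
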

We highlight this combinatorial definition with an example.
\begin{eqnarray*}
\Yvcentermath1\FH_{3,2}\bigg(\young(AACC,BB)\bigg) &=& \Yvcentermath1 \young(1212,12) + \young(2112,12) + \young(1221,12) + \young(2121,12) \\
&+& \Yvcentermath1 \young(1212,21) + \young(2112,21) + \young(1221,21) + \young(2121,21) \\
&=& \Yvcentermath1 -4 \, \young(1112,22),
\end{eqnarray*}
where the last equality is the result of applying the straightening algorithm.
Note that the map in Proposition~\ref{pro:combinat} is well-defined and that by
grouping the summands correctly we can see that the image $\FH_{a,b}(f)$ is a sum of $(b,a)$-symmetrized tableaux.

\begin{proof}[Proof of Propostion~\ref{pro:combinat}]
For an $(a,b)$-symmetrized tableau $f = \sum_{\sigma \in \aS_a}T$ we take a summand $T$ with rectangular content $a \times b$
and interpret it according to its definition as a sum of tableaux with rectangular content $(ab) \times 1$.
Now we apply the map $r$,
which means that we apply the corresponding permutation from $\aS_{ab}$ to the tableaux.
We then turn summands into
tableaux with rectangular content $b \times a$ by applying the inner symmetrization map $\tensor^{ab}V \to \tensor^b \Sym^a V$.
Upon close inspection one sees that we do not need to apply the outer symmetrization, as the resulting sum is already invariant under the action of $\aS_b$.
\end{proof}

\begin{remark}
In principle Propostion~\ref{pro:combinat} gives a combinatorial way of determining the kernel of $\FH_{a,b}$ working with bases and using the straightening algorithm.
The main obstacle is that the straightening algorithm is very slow for tableaux with many boxes.
\end{remark}

\subsection{Evaluating a function}
We fix $p$ many random points $v_1,\ldots,v_p$ in $\Sym^b V$.
To create a basis of $\HWV_\la(\Sym^a\Sym^b V)$
we choose random semistandard tableaux $T$ of shape $\la$ and rectangular content $a \times b$,
take its $(a,b)$-symmetrized tableau $f=\varrho(T)$ (which is an element of $\HWV_\la(\Sym^a\Sym^b V)$) and evaluate $f$ at the $p$ points $v_1,\ldots,v_p$.
We choose more tableaux until we have found $p$ linearly independent tableaux and discard any other tableaux.

We proceed analogously for $\HWV_\la(\Sym^b\Sym^a V)$ with random points $v'_1,\ldots,v'_{p'}$,
but here we store the points $v'_1,\ldots,v'_{p'}$ for later use (as described in Section~\ref{subsec:overview}).

For this approach to work we need a method to evaluate a function given by an $(a,b)$-symmetrized tableau at a point $v \in \Sym^b V$.
Analogously we need a method for the evaluation of $(b,a)$-symmetrized tableaux at a point $v' \in \Sym^a V$.
We will only describe the former one, the latter one being analogous.
We will restrict our description to the case where $v$ is a sum of $a$ powers of linear forms, i.e.,
\[
v = \sum_{i=1}^a (l_i)^b,
\]
where each $l_i \in V$.
We will always choose our points $v_j \in \Sym^b V$ in a way that they satisfy this restriction.
Moreover, we will choose $l_i \in \IZ^{\dim V}$.

Let $T$ be a tableau with rectangular content $a \times b$ and let $f := \varrho(T)$.
The principle of polarization and restitution explains the evaluation $f(v)$ of homogeneous polynomials as a tensor contraction
(where we interpret $f \in \Sym^a\Sym^b V \subseteq \tensor^{ab} V$):
\[
f(v) = \langle f , v^{\otimes a} \rangle.
\]
Since $v^{\otimes a}$ is symmetric under $\aS_a$, we see that
\[
f(v) = \langle T , v^{\otimes a} \rangle.
\]
Expanding $v^{\otimes a}$ we obtain
\begin{equation}\label{eq:largesum}
\langle T , v^{\otimes a} \rangle = \sum_{i_1,i_2,\ldots,i_a = 1}^a \langle T, l_{i_1}^b \otimes l_{i_2}^b \otimes \cdots \otimes l_{i_a}^b\rangle.
\end{equation}
The following claim is the key insight of writing the evaluation in this way.
\begin{claim}
Each summand in \eqref{eq:largesum} is a product of determinants.
\end{claim}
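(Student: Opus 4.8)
The plan is to unwind the pairing $\langle T, l_{i_1}^b \otimes \cdots \otimes l_{i_a}^b\rangle$ using the fact that $T$, viewed as an element of $\tensor^{ab}V$ via the Grassmann–Plücker interpretation, is a tensor product of wedge products $e_{c_1}\wedge e_{c_2}\wedge\cdots\wedge e_{c_k}$, one factor for each column of $\la$ of length $k$, where the indices $c_1 < c_2 < \cdots < c_k$ record the row positions and the specific entries of $T$ in that column tell us which of the $ab$ tensor slots of the right-hand vector are plugged into that wedge. First I would fix a summand, i.e.\ fix the function $j \mapsto i_j$ assigning to each of the $a$ groups of $b$ tensor slots one of the linear forms $l_1,\dots,l_a$; then the right-hand tensor $l_{i_1}^b \otimes \cdots \otimes l_{i_a}^b$ is a pure tensor whose $m$th slot holds some $l_{i(m)} \in V$.

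Next I would use that the natural pairing on $\tensor^{ab}V$ (with $\tensor^{ab}V^*$, but here we may use a chosen bilinear form making $\{e_i\}$ orthonormal, or equivalently work with the dual basis, since the $l_i$ have integer coordinates and all contractions are over $\IZ$) factors as a product over the $ab$ tensor slots, and then regroup this product according to which column of $\la$ each slot belongs to. For a single column of length $k$, the contraction of $e_{c_1}\wedge\cdots\wedge e_{c_k}$ — written as $\sum_{\tau\in\aS_k}\mathrm{sgn}(\tau)\, e_{c_{\tau(1)}}\otimes\cdots\otimes e_{c_{\tau(k)}}$ — against the pure tensor $l_{j_1}\otimes\cdots\otimes l_{j_k}$ sitting in the $k$ slots occupied by that column is exactly $\sum_{\tau}\mathrm{sgn}(\tau)\prod_{t=1}^k \langle e_{c_{\tau(t)}}, l_{j_t}\rangle = \det\big( (l_{j_t})_{c_s} \big)_{s,t=1}^{k}$, the $k\times k$ determinant whose $(s,t)$ entry is the $c_s$-th coordinate of $l_{j_t}$. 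Thus each column of $\la$ contributes one determinant, and the whole summand in \eqref{eq:largesum} is the product of these determinants over all columns of $\la$, which proves the claim.

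The bookkeeping point I would spell out carefully is the matching between tensor slots and tableau entries: the entries of $T$ (a tableau with rectangular content $a\times b$) record, after the inner symmetrization, a coset of permutations, but for a single representative standard tableau with content $(ab)\times 1$ the entry in a given box names which of the $ab$ slots of $v^{\otimes a}$ that box is contracted against, and hence — via the group index $i_j$ fixed for this summand — which linear form $l_{i(m)}$ appears; the column in which the box sits determines which wedge factor (hence which determinant) it feeds into, and its row determines which row of that determinant. The only genuinely delicate step is making this correspondence precise in the presence of the inner symmetrization $s$: one should note that $\langle s(T), v^{\otimes a}\rangle = \langle T, s(v^{\otimes a})\rangle = \langle T, v^{\otimes a}\rangle$ because $v^{\otimes a}$ is already symmetric within each block of $b$ slots (each block being a power $l_i^b$), so passing from the symmetrized tableau $f=\varrho(T)$ down to a single standard representative costs nothing, and likewise the outer $\aS_a$-symmetrization is absorbed because $v^{\otimes a}$ is symmetric across blocks. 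I expect this reduction — confirming that every symmetrization in the construction of $f$ is invisible to the pairing with $v^{\otimes a}$, so that a single standard tableau and a single choice of $(i_1,\dots,i_a)$ really does collapse to a clean product of minors — to be the main obstacle, though it is conceptual rather than computational; once it is in place the determinant identity for each column is immediate from the definition of the wedge product.
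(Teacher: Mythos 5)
Your proposal is correct and follows essentially the same route as the paper: reduce to a single $(ab)\times 1$ representative $\pi\zeta_\la$ by observing that the block-wise symmetrization is absorbed because $(\aS_b)^a$ stabilizes $l_{i_1}^b\otimes\cdots\otimes l_{i_a}^b$, and then contract the column wedges against the pure tensor to get one determinant per column. Your write-up merely spells out the column determinant entries and the slot-to-box bookkeeping more explicitly than the paper does, which is fine.
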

\begin{proof}
Recall that since $T$ has rectangular content $a \times b$ it is a sum of $(b!)^a$ tableaux with rectangular content $(ab)\times 1$.
Note that the group $(\aS_b)^a$ stabilizes $l_{i_1}^b \otimes l_{i_2}^b \otimes \cdots \otimes l_{i_a}^b$.
Therefore it is sufficient to contract
\[
\langle \bar T, l_{i_1}^b \otimes l_{i_2}^b \otimes \cdots \otimes l_{i_a}^b\rangle,
\]
where $\bar T$ is one of the $(ab)\times 1$ summands of $T$, i.e., $\bar T = \pi \zeta_\la$ for some $\pi \in \aS_{ab}$.
 The special form of $\zeta_\la$ (see equation \eqref{eq:def:fla}) implies that a contraction of $\pi\zeta_\la$
 with a decomposable tensor is precisely a product
 of determinants, one determinant for each column of $\la$.
\end{proof}

\section{Speeding up the algorithm}
\label{sec:speedup}
The only problem with the algorithm stated in Section~\ref{sec:naive} is that it is too slow
to do the required calculation in a reasonable amount time.
In this section we present some crucial improvements that make the calculation feasible.

\subsection{The computation tree}
According to Proposition~\ref{pro:combinat} the naive way to compute $\FH_{a,b}^\la$ of an $(a,b)$-symmetrized tableau $f$
is to loop over all ways to place the numbers $1,\ldots,b$ at the positions of the letters in $f$
such that $1,\ldots,b$ gets placed on the A's, $1,\ldots,b$ gets placed on the B's, and so on.
Many of the resulting tableaux will have a column in which two numbers coincide.
By the Grassmann relation these correspond to the zero vector, so they do not contribute to the sum.
We want to avoid enumerating these cases.
A good solution is to place the numbers one after another, each time respecting the constraint
that there should be no column with two coinciding entries.
This can be implemented with a computation tree and depth-first search.
A priori this tree would have $(b!)^a$ leafs, but this constraint prunes the tree considerably.

A second important idea is to use the following common heuristic from computer science:
While we are building the computation tree we are free to choose in which tableau cell we want to place the next number.
We should always choose the cell where we have the fewest possibilities to place numbers.
This heuristic tries to make the decision tree slim.

\subsection{Precaching}
If we do not want to compute $\FH_{a,b}^\la(f)$, but are only interested in
the evaluation $\FH_{a,b}^\la(f)(v)$, then at each computation leaf we compute a product of determinants.
Before starting the algorithm we can quickly compute all the determinants that could possibly arise during the run of this algorithm and store them.
This means that at each leaf of the computation tree we just perform a database lookup operation.

\subsection{Using symmetries}\label{subsec:usingsymmetries}
If two columns $c_1$ and $c_2$ of $f$ coincide, then the set of summands that we get
when applying $\FH_{a,b}(f)$ can be partitioned into two sets:
The ones where the top number in $c_1$ is smaller than the top number in $c_2$
and the ones where the top number in $c_1$ is larger than the top number in $c_2$
(equality cannot hold, since the two cells that we compare have the same letter in $f$).
If we sum up either set of summands, by the Pl\"ucker relations we get the same result.
So we only have to look at one of the two sets.
More generally, if several columns of $f$ coincide, then we can enforce their top entries to be ordered.
Moreover, for each top cell we can determine what its possible minimal and maximal value can be.
For example, when calculating $\FH_{6,6}$
and we have 4 coinciding columns $c_1$, $c_2$, $c_3$, $c_4$, then
the top number of $c_1$ cannot be $4$, $5$, or $6$, as that would mean that the top number of $c_4$ would exceed $6$,
which is impossible since we only place numbers $1,\ldots,6$.
More generally, the (min,max) pairs for 4 coinciding columns are
$(1,3),(2,4),(3,5),(4,6)$ from left to right.
This can readily be generalized to an arbitrary number of columns.
These restrictions help to prune the search tree even earlier.

These symmetry considerations give a large speedup factor if $f$ has many coinciding columns.
It turns out that if the plethysm coefficient is large,
some of the tableaux have only very few coinciding columns.
This is the main reason why we do not study $\ker\FH_{6,6}$ completely.

\subsection{Parallelization}
The computation can be efficiently parallelized on several levels:
First, each type $\lambda$ can be analyzed independently.
Moreover, each tableau $f$ can be evaluated at each point independently.
Such an evaluation of a tableau at a point
can be split up further by fixing a number of processors $N$
and ensuring that in a fixed depth $D$ of the computation tree
the processor with number $C \in \{0,\ldots,N-1\}$ only takes care of the $I$th subtree if $I \equiv C \mod N$.

\subsection{Other speedups}
In principle the algorithm could be sped up further by choosing the random generic points a little less generic,
so that some subsets of $\{l_i \mid 1 \leq i \leq p'\}$ are linearly dependent.
Then we could prune the tree even earlier.
We did not implement this.

It is also possible to not evaluate at a point, but to only contract with a symmetric tensor.
This could speed up the computation if the number of rows is not maximal.
We also did not implement this.

\section{Implementation and running time}
\label{sec:implandrun}
We implemented the above algorithm in C using the following external sources:
\begin{itemize}
\item the BigDigits multiple-precision
arithmetic library Version 2.4 originally written by David Ireland,
copyright (c) 2001-13 D.I. Management Services Pty Limited, all rights reserved,
\item helpful C snippets from stackexchange for string handling,
\item code for ranking and unranking combinations from Derrick Stolee's blog.
\end{itemize}

We ran our code on the Texas A\&M calclab cluster, which provides up to roughly 2000 CPUs for 9 hours each day,
more hours on the weekend.

While $\ker\FH_{5,5}$ can be determined on the cluster in a minute,
the computation of the part of $\ker\FH_{6,6}$ took several days and
the computation for $\ker\FH_{5,6}$ ran for approximately 2 weeks.


\end{document}